\newif\ifprivate
\renewcommand{\gitMark}{\jobname\,\textbullet{}\,\gitFirstTagDescribe\,\textbullet{}\,\gitAuthorName,\,\gitAuthorIsoDate}
\newcommand{\widetildealt}[1]{#1'}
\newcommand{\rowvectors}{K^{1\times D}}
\newcommand{\columnvectors}{K^{D\times 1}}
\newcommand{\rowvectorstilde}{K^{1\times \widetildealt D}}
\newcommand{\columnvectorstilde}{K^{\widetildealt D\times 1}}
\newcommand{\calA}{\mathcal{A}}
\newcommand{\N}{\mathbb{N}}
\newcommand{\C}{\mathbb{C}}
\DeclareMathOperator{\digits}{digits}
\DeclareMathOperator{\spanMO}{span}
\DeclareMathOperator{\val}{value}
\DeclarePairedDelimiter\set\{\}
\DeclarePairedDelimiterX{\setm}[2]{\{}{\}}{#1\,\delimsize\vert\,\mathopen{}#2}
\newtheorem{lemma}{Lemma}[section]
\newtheorem{proposition}[lemma]{Proposition}
\newtheorem{theorem}[lemma]{Theorem}
\theoremstyle{definition}
\newtheorem{definition}[lemma]{Definition}
\newtheorem{example}[lemma]{Example}
\theoremstyle{remark}
\newtheorem{remark}[lemma]{Remark}
\begin{document}

\title{\vspace*{-2ex}
  A Note on the Relation between \\
  Recognisable Series and \\
  Regular Sequences, and Their \\
  Minimal Linear Representations%
  \footnotetext{%
  \begin{description}
  \item [Clemens Heuberger]
    \href{mailto:clemens.heuberger@aau.at}{\texttt{clemens.heuberger@aau.at}},
    \url{https://wwwu.aau.at/cheuberg},
    Alpen-Adria-Uni\-ver\-si\-tät Klagenfurt, Austria
  \item [Daniel Krenn]
    \href{mailto:math@danielkrenn.at}{\texttt{math@danielkrenn.at}},
    \url{http://www.danielkrenn.at},
    Paris Lodron University of Salzburg, Austria
  \item [Gabriel F.\ Lipnik]
    \href{mailto:math@gabriellipnik.at}{\texttt{math@gabriellipnik.at}},
    \url{https://www.gabriellipnik.at},
    Graz University of Technology, Austria
  \item [Support]
    Clemens Heuberger is supported by the Austrian Science Fund (FWF): DOC\,78.
    Gabriel F.\ Lipnik is supported by the Austrian Science Fund (FWF): W\,1230.
  \item [2020 Mathematics Subject Classification]
    11A63, 
    68Q45, 
    68R05, 
    68R15  
  \item[Key words and phrases]
    regular sequence,
    recognisable series
  \end{description}}}

\author{Clemens Heuberger, Daniel Krenn, Gabriel F.\ Lipnik}
\date{\vspace*{-3ex}}
\maketitle

\begin{abstract}
  In this note, we precisely elaborate the connection between
  recognisable series (in the sense of Berstel and Reutenauer) and
  $q$-regular sequences (in the sense of Allouche and Shallit) via
  their linear representations. In particular, we show that the
  minimisation algorithm for recognisable series can also be used to
  minimise linear representations of $q$-regular sequences.
\end{abstract}

\section{Introduction}
\subsection{Overview}\label{sec:introduction-overview}
Every regular sequence can also be seen as a recognisable series---definitions of both notions
are recalled below---and both can be described by a linear representation using a collection of square matrices and
two vectors. So when the authors of this note implemented both concepts in
SageMath~\cite{SageMath:2023:10.0}, this relation and property played fundamental roles.
For recognisable series,
there exists an algorithm to minimise the dimension of their linear
representations based on methods of
Schützenberger~\cite{Schuetzenberger:1961:recurrent-events,
  Schuetzenberger:1961:family-of-automata}; see Berstel and
Reutenauer~\cite[Chapter~2]{Berstel-Reutenauer:2011:noncommutative-rational-series}. So
it seemed to be reasonable\footnote{We did not find any reference for that. If a
  reader is aware of such a reference, please contact the authors.
  Added in proof: a remark on the topic can be found in Dumas~\cite{Dumas:2014:asymp}
  after Definition~2.} that this
algorithm can also be used for regular sequences.

When implementing the results
of~\cite{Heuberger-Krenn-Lipnik:2021:asymp-analy-recur-sequen}, the authors of this note suddenly
encountered a situation where the minimisation algorithm for recognisable
series failed for a regular
sequence.\footnote{See
  \url{https://github.com/sagemath/sage/issues/32921\#issuecomment-1418154841}. A
smaller example is presented in Example~\ref{ex:minimzation-gone-wrong}.}
At first, we were quite puzzled. It soon turned out that the linear
representation we used for our regular sequence did not fulfil a certain
eigenvector property that should be fulfilled for regular sequences, but we
were quite unsure whether fixing this completely solves the problem. The answer
is yes, and the details are the topic of this note.

\subsection{Recognisable Series and Regular Sequences}\label{section:recognisable-series-regular-sequences}
Let $\N_{0}$ denote the set of non-negative
integers and $K$ be an arbitrary field. Moreover, let $q\ge 2$ be an integer and set
$\calA_q\coloneqq \set{0, \ldots, q-1}$.

We first recall the definition of a \emph{recognisable series}; the book of Berstel and
Reutenauer~\cite[Chapter~2]{Berstel-Reutenauer:2011:noncommutative-rational-series}
provides an introduction to these series.

\begin{definition}\label{definition-recognisable-series}
  Let $\calA$ be a finite set. A sequence $x\in K^{\calA^\star}$
  is said to be a \emph{recognisable series}
  if there are a non-negative integer~$D$, a family $M=(M(a))_{a\in\calA}$
  of $D\times D$ matrices over~$K$ and
  vectors $u\in\rowvectors$, $w\in \columnvectors$ such that for all
  $b=b_0\ldots b_{\ell-1}\in\calA^\star$, we have
  \begin{equation*}
    x(b) = u M(b) w
  \end{equation*}
  with%
  \footnote{In other words, we extend the map $M\colon \calA\to K^{D\times
      D}$ to a monoid homomorphism from $\calA^\star$ to $K^{D\times D}$.
    By convention, if~$b$ is the empty word in $\calA^\star$, then
    $M(b)$ is the $D$-dimensional identity matrix.
    If the dimension~$D$ equals~$0$, then the product of
    an empty vector, $M(b)$ (for any~$b$) and another empty vector
    is an empty double sum, so equals~$0$.} 
  \begin{equation}\label{eq:morphism-extension}
    M(b)\coloneqq M(b_0) \cdots M(b_{\ell-1}).
  \end{equation}
  
  We call $(u, M, w)$ a \emph{linear representation} of~$x$ and~$D$
  the \emph{dimension} of the linear representation of~$x$.
\end{definition}

Note that we will use the convention~\eqref{eq:morphism-extension} throughout
this note.
Next, we recall the definition%
\footnote{Strictly speaking, this is an algorithmic characterisation of a
  regular sequence which is equivalent to the definition given by
  Allouche and Shallit~\cite{Allouche-Shallit:1992:regular-sequences},
  who first introduced this concept: they define a sequence~$y$ to be
  $q$-regular if the kernel
  \begin{equation*}
    \setm[\big]{y\circ (n\mapsto q^{j}n + r)}{
      \text{$j$, $r \in \N_0$ with $0\leq r < q^{j}$}}
  \end{equation*}
is contained in a finite dimensional vector space.}
of a \emph{regular sequence}; see Allouche and
Shallit~\cite{Allouche-Shallit:1992:regular-sequences,
  Allouche-Shallit:2003:autom} for characterisations, properties,
and an abundance of examples. Asymptotic properties and further examples have
been studied; cf.\ \cite{Heuberger-Krenn:2018:asy-regular-sequences},
\cite{Heuberger-Krenn-Lipnik:2021:asymp-analy-recur-sequen}, and the references therein.

\begin{definition}\label{definition:regular-sequence}
  A sequence $y\in K^{\N_0}$ is said to be \emph{$q$-regular}%
  \footnote{In the standard
  literature, the basis is frequently denoted by
  $k$ instead of our $q$ here.}
  if there are a
  non-negative integer~$D$, a family $M=(M(a))_{a\in\calA_q}$
  of $D\times D$ matrices over~$K$,
  a vector $u\in \rowvectors$ and
  a vector-valued sequence $v\in (\columnvectors)^{\N_0}$
  such that for all $n\in\N_0$, we have
  \begin{equation*}
    y(n)=uv(n),
  \end{equation*}
  and
  such that for all $r\in\calA_q$ and all $n\in\N_0$, we have
  \begin{equation}\label{equation:regular-sequence}
    v(qn+r)=M(r)v(n).
  \end{equation}

  We call $(u, M, w)$ a \emph{linear representation} of~$x$ and~$D$
  the \emph{dimension} of the linear representation of~$x$.
\end{definition}

By induction using~\eqref{equation:regular-sequence}, it is easily seen
that for all $n\in\N_0$, we have
\begin{equation}\label{eq:product-representation-of-regular-sequences}
  y(n)=uM(\digits_q(n))w
\end{equation}
where $\digits_q(n)=n_0\ldots n_{\ell-1}$ is the standard $q$ary expansion of
$n$, i.\,e., $n=\sum_{0\le j<\ell} n_jq^j$ with $n_{\ell-1}\neq 0$, and
$w\coloneqq v(0)$. In other
words, given a $q$-regular sequence $y$ with linear representation $(u, M, w)$
and considering the recognisable series $x$ with
linear representation $(u, M, w)$ over the alphabet $\calA_q$, we can write
$y=x \circ \digits_q$.

As mentioned in Section~\ref{sec:introduction-overview},
this is how the authors of this note implemented regular sequences in SageMath: these were a special
case of recognisable series---technically speaking, the class
\texttt{RegularSequence} is a subclass of the class
\texttt{RecognizableSeries}---where
accessing the values for integer values is translated accordingly and
additional properties (such as subsequences) are implemented. To construct a
regular sequence in SageMath, the input is a family of square matrices $M$ and
two vectors $u$ and $w$.

\subsection{Minimisation Failing?}

Definitions~\ref{definition-recognisable-series}
and~\ref{definition:regular-sequence} are worded in such a way as to allow several
different linear representations of the same recognisable series or regular
sequence. When working with such objects algorithmically, inevitably, the
question of minimality of the dimension of the linear representation arises.

As mentioned in Section~\ref{sec:introduction-overview}, Berstel and
Reutenauer~\cite{Berstel-Reutenauer:2011:noncommutative-rational-series}
describe an algorithm to determine a linear representation of minimal dimension
for a recognisable series, given by some linear representation. The remarks at
the end of Section~\ref{section:recognisable-series-regular-sequences} imply
that this algorithm is also used in SageMath to find a linear representation of
minimal dimension of a regular sequence. As also mentioned in
Section~\ref{sec:introduction-overview}, this led to a problem, and here is a
simplified example illustrating it.

\begin{example}\label{ex:minimzation-gone-wrong}
  Let $q=2$,
  \begin{equation}\label{eq:example-minimisation-gone-wrong-linear-representation}
    u = \begin{pmatrix}1 & 0\end{pmatrix},\quad
    M(0) = \begin{pmatrix}
      1 & 1 \\
      0 & 0
    \end{pmatrix},\quad
    M(1) = \begin{pmatrix}
      1 & 0 \\
      0 & 0
    \end{pmatrix},\qq{and}
    w = \begin{pmatrix}0 \\ 1\end{pmatrix},
  \end{equation}
  and consider the sequence $y$ defined
  by~\eqref{eq:product-representation-of-regular-sequences}
  for these values of $u$, $M$, and $w$.

  We have $y(0)=uw=0$ and for all positive integers $n$, the standard binary
  expansion $\digits_2(n)$ ends on a $1$, so writing $\digits_2(n)=b1$ for some
  $b\in\set{0,1}^{\star}$, we have
  \begin{equation*}
    y(n) = uM(b)M(1)w =
    \begin{pmatrix}1 & 0\end{pmatrix}
    M(b)
   \begin{pmatrix}
      1 & 0 \\
      0 & 0
    \end{pmatrix}
    \begin{pmatrix}0 \\ 1\end{pmatrix}
    =
    \begin{pmatrix}1 & 0\end{pmatrix}
    M(b)
    \begin{pmatrix}0\\0\end{pmatrix}
    =0.
  \end{equation*}
  Hence, we have shown $y(n)=0$ for all $n\in\N_0$. For the zero sequence, the
  minimal linear representation is the representation of dimension $0$, i.\,e.,
  the left and right vectors as well as the matrices $M(a)$ are empty and all
  matrix products, vector-matrix products, and matrix-vector products are empty
  sums and therefore $0$, as required.

  We now compare our result here with the one in SageMath. The input
  \begin{verbatim}
S = RecognizableSeriesSpace(QQ, [0, 1])

u = vector([1, 0])
M_0 = matrix([[1, 1], [0, 0]])
M_1 = matrix([[1, 0], [0, 0]])
w = vector([0, 1])

x = S([M_0, M_1], u, w)
x.minimized().linear_representation()
\end{verbatim}
  yields the output
  \begin{verbatim}
((1, 0), Finite family {0: [0 1] [0 1], 1: [1 0] [1 0]}, (0, 1)) .
\end{verbatim}
  We see that SageMath returns a linear representation of dimension 2 (not
  identical to the input linear representation) and claims it to be minimal.
  Note that we used a recognisable series here because the algorithm by Berstel
  and Reutenauer is formulated for recognisable series.
\end{example}

So we do have a problem: we easily saw that our regular sequence $y$ has a linear
representation of dimension $0$, but SageMath answered that all linear
representations of the underlying recognisable series $x$
have dimension at least $2$.

A few possibilities come to mind. Almost unthinkably, there could be an error
in the algorithm of Berstel and Reutenauer, or, more probably, in our
implementation of that algorithm in SageMath. Despite a clear peer-reviewing
policy for contributions into SageMath, we might have overlooked something.
We ask SageMath once more to get a partial answer.

\begin{example}[Continuation of Example~\ref{ex:minimzation-gone-wrong}]
  \label{ex:minimzation-gone-wrong-cont}
  The input
\begin{verbatim}
x
\end{verbatim}
  gives the first few terms of the recognisable series as
\begin{verbatim}
[0] + [00] + [10] + [000] + [010] + [100] + [110] 
    + [0000] + [0010] + [0100] + ...
\end{verbatim}
  So it seems that the recognisable series does not vanish. The output suggests
  that the recognisable series $x$ with the linear representation defined
  in~\eqref{eq:example-minimisation-gone-wrong-linear-representation} is one
  exactly for those input words with trailing zeros. Indeed,
  \begin{equation*}
    x(b0) = uM(b)M(0)w =
    \begin{pmatrix}1 & 0\end{pmatrix}
    M(b)
    \begin{pmatrix}
      1 & 1 \\
      0 & 0
    \end{pmatrix}
    \begin{pmatrix}0 \\ 1\end{pmatrix}
    =
    \begin{pmatrix}1 & 0\end{pmatrix}
    M(b)
    \begin{pmatrix}1\\0\end{pmatrix}
    = 1
  \end{equation*}
  by induction.

  This means that~$x(b)=1$ if $0$ is a suffix of $b$ and $x(b)=0$ otherwise.
  It is therefore clear that there cannot be a linear representation of
  $x$ of dimension $0$ (because that would lead to all zeros due to empty
  sums). It is not hard to see that $x$ cannot have a linear representation of
  dimension $1$, either: then the matrices $M(0)$ and $M(1)$ of that linear
  representation would forcibly commute and $x(b)$ could only depend on the
  number of occurrences of the letters $0$ and $1$ in $b$, but not on their
  position. Thus, independently of the algorithm of Berstel and Reutenauer (and
  its implementation in SageMath), we conclude that any linear representation
  of $x$ must have dimension at least $2$.
\end{example}

So with Examples~\ref{ex:minimzation-gone-wrong}
and~\ref{ex:minimzation-gone-wrong-cont}, we did not construct a
counter example to the validity of the minimisation algorithm of
Berstel and Reutenauer for recognisable series, however in this
particular example, we see that we cannot apply the algorithm to find
a minimal representation of the regular sequence. More generally, this means
that choosing an arbitrary family of matrices $M$ and left and right
vectors $u$ and $w$, respectively, and defining a regular sequence
by~\eqref{eq:product-representation-of-regular-sequences} can lead to
situations where the algorithm of Berstel and Reutenauer for recognisable
series does not return a minimal linear representation for the regular
sequence.

Is that the final word? Or can we somehow find at least some situations where
using the minimisation algorithm for recognisable series is valid for the
``corresponding'' regular sequence?

\subsection{Why this Note is Needed}\label{sec:why}

The short answer is, we need this note to discuss the questions raised
by the example above. So, let us briefly come back to Example~\ref{ex:minimzation-gone-wrong}.
A key feature was that at first we
only inserted words with trailing one (or the empty word) into the recognisable
series because standard binary expansions of positive integers have exactly
this property. And indeed, as the discussion in the examples shows, inserting
any other binary expansion (with trailing zeros) instead of the standard binary
expansion would lead to another result. This seems to be an important
distinction between recognisable series (all words allowed) and regular
sequences (only words without trailing zeros inserted into the corresponding
recognisable series).

At first glance, the following observation seems to be a technical
detail: If we insert $n=r=0$ into~\eqref{equation:regular-sequence}, we obtain
$v(0)=M(0)v(0)$. In other words, if not zero, then $v(0)$ is an eigenvector of
$M(0)$ associated with the eigenvalue $1$. It seems to be a minor detail
because once we replace the formulation~\eqref{equation:regular-sequence} by
\eqref{eq:product-representation-of-regular-sequences}, it does not seem to be
relevant any more. However, we note that this condition is not fulfilled in
Example~\ref{ex:minimzation-gone-wrong}, as
$M(0)w=\big(\begin{smallmatrix}1\\0\end{smallmatrix}\big)\neq w$. So $(u, M,
w)$ as given
by~\eqref{eq:example-minimisation-gone-wrong-linear-representation} is
\emph{not} a linear representation of the $2$-regular sequence $y$ considered
in Example~\ref{ex:minimzation-gone-wrong} (and we carefully never claimed it
to be one).\footnote{In order to construct a linear representation for a regular sequence out of a linear representation of a recognisable series as given by~\eqref{eq:example-minimisation-gone-wrong-linear-representation}, one can follow the proof of~\cite[Lemma~4.1]{Allouche-Shallit:1992:regular-sequences}.}

This raises two questions. Suppose we have a regular sequence $y$, take a linear
representation $(u, M, w)$ of that regular sequence (thus implying $M(0)w=w$),
take it as a linear representation of a recognisable series $x$, and then run the
minimisation algorithm by Berstel and Reutenauer on it. Will this approach yield a minimal
linear representation of $y$? And will that linear representation still fulfil
the essential eigenvector property?

The answer to both questions is yes. But according to the saying ``fool me
once, shame on you; fool me twice, shame on me'', we should make sure to
have a proof. The nature of SageMath as an open source software system also
means that this proof should not be a ``well-known fact in the community'' or
some kind of an urban myth, but something which can be clearly referenced.
In particular, such a reference is put in
the documentation of SageMath. This note sets out to provide that proof and to
clarify the relation between recognisable series and regular sequences,
their linear representations, and their minimal linear representations.

\subsection{Structure of this Note}
In
Section~\ref{sec:recognisable-series}, we collect information on
recognisable series and their minimal linear representations. Finally, in
Section~\ref{sec:regular-sequences}, we consider regular sequences, their
connection to recognisable series, and prove the main result of this 
paper (Theorem~\ref{theorem}) as outlined in the last two paragraphs of Section~\ref{sec:why}.
We close by Example~\ref{ex:ignore-trailing-zeros} providing another angle
on what can go wrong with minimisation.

\section{Recognisable Series}\label{sec:recognisable-series}

\begin{definition}
  A linear representation of a recognisable series~$x$ is said to be a
  \emph{minimal}%
  \footnote{In~\cite{Berstel-Reutenauer:2011:noncommutative-rational-series},
    these linear representations are called \emph{reduced} instead of
    minimal.}
  linear representation of~$x$ if its dimension is minimal over all linear
  representations of~$x$.
\end{definition}

Berstel and Reutenauer present a characterisation for minimal linear
representations~\cite[Proposition~2.1]{Berstel-Reutenauer:2011:noncommutative-rational-series}.
In this note we only need the direction of the following lemma and for reasons of
self-containedness, we give an ad-hoc proof here.

\begin{lemma}[{Berstel--Reutenauer~\cite[Proposition~2.1]{Berstel-Reutenauer:2011:noncommutative-rational-series}}]
  \label{lemma:necessary-condition-minimality}
  Let $\calA$ be a finite set, $x\in K^{\calA^\star}$ be a recognisable series
  and $(u, M, w)$ be a minimal linear representation of~$x$ of dimension~$D$.
  Then $\spanMO(\setm{u M(b)}{b\in\calA^{\star}})=\rowvectors$.
\end{lemma}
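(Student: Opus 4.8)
The plan is to argue by contradiction via a minimality-violating construction. Write $V \coloneqq \spanMO(\setm{u M(b)}{b\in\calA^{\star}})$, which is a subspace of $\rowvectors$, and suppose $V \neq \rowvectors$, so that $D' \coloneqq \dim V < D$. From $V$ I would then extract a linear representation of~$x$ of dimension~$D'$, contradicting the minimality of~$D$.

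Two structural facts drive the construction. First, taking $b$ to be the empty word and using $M(b)=I$ shows that $u\in V$. Second, $V$ is invariant under right multiplication by every $M(a)$: for a generator $u M(b)$ we have $u M(b) M(a) = u M(ba)$, which is again a generator (here I use that the product convention gives $M(bc)=M(b)M(c)$ for concatenated words), and invariance for all of $V$ then follows by linearity.

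Concretely, I would fix a basis $v_1,\dots,v_{D'}$ of~$V$ and collect these row vectors as the rows of a matrix $T\in K^{D'\times D}$ of full row rank. The right-invariance yields, for each $a\in\calA$, a unique matrix $M'(a)\in K^{D'\times D'}$ with $T M(a) = M'(a) T$, since each row $v_i M(a)$ lies in~$V$ and is hence a linear combination of the~$v_j$, with coefficients determined uniquely because the rows of~$T$ are linearly independent. Since $u\in V$, there is a vector $u'\in\rowvectorstilde$ with $u = u' T$, and I set $w'\coloneqq T w\in\columnvectorstilde$. Writing $M'=(M'(a))_{a\in\calA}$, I claim $(u',M',w')$ is the desired representation.

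It remains to verify that $(u',M',w')$ represents~$x$. A straightforward induction on the length of~$b$, with base case $T M(\varepsilon)=T=M'(\varepsilon)T$ for the empty word~$\varepsilon$ and inductive step $T M(ca)=T M(c) M(a)=M'(c)TM(a)=M'(c)M'(a)T=M'(ca)T$, yields $T M(b)=M'(b)T$ for every $b\in\calA^\star$. Consequently $u' M'(b) w' = u' M'(b) T w = u' T M(b) w = u M(b) w = x(b)$, so $(u',M',w')$ is a linear representation of~$x$ of dimension $D'<D$, contradicting minimality. The main point to get right is the intertwining relation $T M(a)=M'(a)T$ and its propagation to arbitrary words; the degenerate case $D=0$ needs no separate treatment, since then $D'<D$ is impossible and the claimed span is all of the zero space $\rowvectors$ anyway.
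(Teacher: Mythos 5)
Your proposal is correct and follows essentially the same route as the paper: a proof by contradiction in which the right-$M(a)$-invariant subspace spanned by the vectors $uM(b)$ is used to cut down to a representation of strictly smaller dimension. Your matrix $T$ (with a basis of the span as rows) and the intertwining relation $TM(a)=M'(a)T$ are just an explicit coordinate version of the paper's coordinate map $\Phi_B$ and its matrix representations $\widetildealt{M}(a)$, so the two arguments coincide step for step.
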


The basic idea of the proof is that if that span had lower dimension, then
everything would take place in a proper subspace. Taking matrix representations with
respect to a basis of the subspace would then give a lower dimensional linear representation.

We mention that by symmetry, we also have $\spanMO(\setm{M(b)w}{b\in
  \calA^\star})=\columnvectors$; but we will not need this property here.

\begin{proof}[{Proof of Lemma~\ref{lemma:necessary-condition-minimality}}]
  Let $S\coloneqq \setm{uM(b)}{b\in\calA^{\star}}$.
  Toward a contradiction, assume that we have $\spanMO(S)=W$ for some proper subspace~$W$ of~$\rowvectors$ of
  dimension $\widetildealt D$ and let
  $B$ be a basis of~$W$. Let $\Phi_B\colon \rowvectorstilde\to W$ be
  the coordinate map with respect to the basis~$B$.

  As an implication of the definition of~$W$, we have $u\in W$, and so
  there exists a $\widetildealt u\in \rowvectorstilde$ such
  that $\Phi_B(\widetildealt u)=u$.
  For all $a\in\calA$, the map $v\mapsto vM(a)$ is an endomorphism of~$W$ by
  construction of~$W$: for $b\in\calA^\star$, we have
  $u M(b)M(a) = u M(ba) \in S$, which implies that the 
  map under consideration maps~$S$ into itself and 
  therefore maps~$W=\spanMO(S)$ into itself.
  We now construct a family $\widetildealt M=(\widetildealt M(a))_{a\in\calA}$
  of $\widetildealt D\times \widetildealt D$ matrices over~$K$ as follows.
  For all $a\in\calA$, let $\widetildealt{M}(a)$ be the
  matrix representation of the endomorphism $v\mapsto vM(a)$ of~$W$ with respect to the
  basis~$B$, i.\,e., $\Phi_B(\widetildealt v) M(a)=\Phi_B(\widetildealt v \widetildealt
  M(a))$ holds for all $\widetildealt v\in \rowvectorstilde$. Let $\widetildealt w\in \columnvectorstilde$ be the
  matrix representation of the homomorphism $v\mapsto vw$ from~$W$ to~$K$ with
  respect to the basis~$B$ of~$W$ and the standard basis of~$K$, i.\,e.,
  $\Phi_B(\widetildealt v) w=\widetildealt v \widetildealt w$ holds for all $\widetildealt
  v\in \rowvectorstilde$.

  For all $b\in\calA^\star$, this implies that
  \begin{equation*}
    x(b)
    = u M(b) w
    = \Phi_B(\widetildealt u)M(b) w
    = \Phi_B(\widetildealt u \widetildealt M(b))w
    = \widetildealt u \widetildealt M(b)\widetildealt w.
  \end{equation*}
  In other words, $(\widetildealt u, \widetildealt M, \widetildealt w)$
  is a linear representation of~$x$, and its dimension is $\widetildealt D<D$, a
  contradiction to $(u, M, w)$ being a minimal linear representation of~$x$.
\end{proof}

Let us consider a linear representation~$(u, M, w)$ of a recognisable
series~$x \in K^{\calA^\star}$, $\calA$ a finite set.
If there is a $z\in\calA$ with $M(z)w=w$, then it is clear
that $x(bz)=x(b)$ holds for all $b\in\calA^\star$.
It turns out that the converse is true if the
linear representation is minimal. This is the assertion of the following proposition.

\begin{proposition}\label{proposition:eigenvector}Let $\calA$ be a finite set, $x\in K^{\calA^\star}$ be a
  recognisable series and $(u, M, w)$ be a minimal linear representation of~$x$.
  Let $z\in\calA$ be such that $x(bz)=x(b)$ holds for all
  $b\in\calA^\star$. Then we have $M(z)w=w$.
\end{proposition}
\begin{proof}
  Let $D$ be the dimension of the linear representation $(u, M, w)$, and
  set $S\coloneqq \setm{uM(b)}{b\in\calA^{\star}}$. As
  \begin{equation*}
    uM(b)w=x(b)=x(bz)=uM(bz)w=uM(b)M(z)w
  \end{equation*}
  holds for all $b\in \calA^\star$,
  the linear maps $v \mapsto vw$ and $v \mapsto vM(z)w$ from $\rowvectors$ to~$K$
  coincide on~$S$.
  As~$S$ generates~$\rowvectors$ by
  Lemma~\ref{lemma:necessary-condition-minimality}, these maps also
  coincide on $\rowvectors=\spanMO(S)$.
  Therefore, their matrix representations~$w$ and~$M(z)w$ coincide.
\end{proof}

\begin{definition}
  Let $x\in K^{\calA_q^\star}$ be a recognisable series such that
  $x(b0)=x(b)$ holds for all $b\in\calA_q^\star$. Then~$x$ is said to be
  \emph{compatible with regular sequences} (or simply \emph{compatible}).
\end{definition}
\begin{remark}\label{remark:eigenvector}
  Let~$x$ be a recognisable series with minimal
  linear representation~$(u, M, w)$. Then by Proposition~\ref{proposition:eigenvector},
  $x$ being compatible is equivalent to the condition~$M(0)w=w$.
\end{remark}

The following example shows, however, that non-minimal linear
representations~$(u, M, w)$ of a compatible recognisable series do not
necessarily satisfy the property~$M(0)w=w$.

\begin{example}
  Consider the constant recognisable series
  $x\in\C^{\set{0,1}^{\star}}$ with $x(b) = 1$ for all
  $b\in\set{0,1}^{\star}$. It is clear that~$x$ is compatible and a
  minimal linear representation $(u, M, w)$ is given by
  \begin{equation*}
    u = (1)\in \mathbb{C}^{1\times 1},\quad M(0) = M(1) = (1) \in\mathbb{C}^{1\times 1}\qq{and} w = (1) \in \mathbb{C}^{1\times 1}.
  \end{equation*}
  So $M(0)w = w$ holds, as stated in
  Remark~\ref{remark:eigenvector}.

  Moreover, $(u', M', w')$ with
  \begin{equation*}
    u' = \begin{pmatrix} 1 & 0\end{pmatrix}, \quad
    M'(0) = M'(1) = \begin{pmatrix} 1 & 0\\ 0 & 2 \end{pmatrix}
    \qq{and}
    w' = \begin{pmatrix} 1\\ 1 \end{pmatrix}
  \end{equation*}
  is also a linear representation of~$x$: for a word
  $b\in\set{0,1}^{\star}$ of length~$\ell$, we have
  \begin{align*}
    u'M'(b)w' &=
    \begin{pmatrix} 1 & 0\end{pmatrix}
    \begin{pmatrix} 1 & 0\\ 0 & 2 \end{pmatrix}^{\ell}
    \begin{pmatrix} 1\\ 1 \end{pmatrix}\\ &=
    \begin{pmatrix} 1 & 0\end{pmatrix}
    \begin{pmatrix} 1 & 0\\ 0 & 2^{\ell} \end{pmatrix}
    \begin{pmatrix} 1\\ 1 \end{pmatrix}\\ &=
    \begin{pmatrix} 1 & 0\end{pmatrix}
    \begin{pmatrix} 1\\ 2^{\ell} \end{pmatrix} = 1 = x(b);
  \end{align*}
  the lower right entry in $M'(b)$ is annihilated by the
  zero in~$u'$. However, $M'(0)w' = w'$ does not hold. This is no
  contradiction to Remark~\ref{remark:eigenvector} because
  $(u', M', w')$ is not minimal.
\end{example}

\section{Regular Sequences}\label{sec:regular-sequences}
\begin{definition}
  A linear representation of a regular sequence~$y$ is said to be
  a \emph{minimal}
  linear representation of~$y$ if its dimension is
  minimal over all linear representations of~$y$.
\end{definition}

The first statement of the following lemma corresponds
to~\cite[Lemma~4.1]{Allouche-Shallit:1992:regular-sequences} and has already
been discussed in \eqref{eq:product-representation-of-regular-sequences}; the
second statement has been discussed towards the end of
Section~\ref{sec:why}. Nevertheless, we restate it here for completeness
and refer to the mentioned references for proofs.

\begin{lemma}\label{lemma:regular-sequence-as-recognisable-series}
  Let $y\in K^{\N_0}$ be a $q$-regular sequence with linear representation $(u,
  M, w)$ and let $n\in \N_0$. Then we have
  \begin{equation}\label{equation:regular-sequence-as-recognisable-series}
    y(n)=u M(\digits_q(n)) w.
  \end{equation}
  Furthermore, we have $M(0)w=w$.
\end{lemma}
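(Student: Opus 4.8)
The plan is to first establish the auxiliary identity
\begin{equation*}
  v(\val(b)) = M(b)\,w
\end{equation*}
for \emph{every} word $b\in\calA_q^\star$, not only for standard expansions, and then to read off both assertions as special cases. Here $w=v(0)$ by Definition~\ref{definition:regular-sequence}, and $v$ is the vector-valued sequence provided by that definition.

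I would prove the auxiliary identity by induction on the length of~$b$. For the base case, where $b$ is the empty word, we have $\val(b)=0$ and $M(b)$ is the identity matrix, so both sides equal $v(0)=w$. For the inductive step, write $b=a_0 b'$, where $a_0\in\calA_q$ is the least significant digit and $b'=a_1\ldots a_\ell$ is the remaining, shorter word. The key arithmetic observation is that $\val(b)=q\,\val(b')+a_0$, so the defining recurrence~\eqref{equation:regular-sequence} with $n=\val(b')$ and $r=a_0$ yields $v(\val(b))=M(a_0)\,v(\val(b'))$. Applying the induction hypothesis $v(\val(b'))=M(b')\,w$ and using $M(a_0)M(b')=M(b)$ then gives the claim.

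With the auxiliary identity in hand, the first assertion is immediate: choosing $b$ to be the standard $q$-ary expansion of~$n$ and multiplying by~$u$ on the left gives $y(n)=u\,v(n)=u\,M(b)\,w$, which is~\eqref{equation:regular-sequence-as-recognisable-series}. For the second assertion, I would invoke the recurrence~\eqref{equation:regular-sequence} once more, now at $n=0$ and $r=0$: since $q\cdot 0+0=0$, it reads $v(0)=M(0)\,v(0)$, which is exactly $M(0)\,w=w$.

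The only point requiring genuine care is the inductive step, where the product-order convention for $M(b)$, with the least significant digit on the left, must be matched against the direction in which the recurrence strips digits; an index mismatch there is the one place where the argument could silently go wrong. Everything else, including the degenerate case $D=0$, is routine.
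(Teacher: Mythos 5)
Your proposal is correct and follows essentially the same route as the paper's own proof: the same auxiliary identity $v(\val(b))=M(b)w$ established by induction that strips the least significant digit via the recurrence~\eqref{equation:regular-sequence}, and the same specialisations (multiplying by~$u$ for the first assertion, taking $n=r=0$ for the second). Your explicit attention to matching the product-order convention $M(b)=M(a_0)\cdots M(a_\ell)$ against the digit-stripping direction is exactly the point where the paper's argument also relies on the conventions being consistent, so nothing is missing.
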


In the following lemma, for $b=a_0\ldots a_{\ell-1}\in\calA_q^\star$, we set
\begin{equation*}
  \val(b)\coloneqq \sum_{j=0}^{\ell-1} a_j q^j,
\end{equation*}
with the usual convention that if~$b$ is the empty word in $\calA_{q}^\star$, then
$\val(b)=0$.

\begin{lemma}\label{lemma:associated-recognisable-series-well-defined}
  Let $y\in K^{\N_0}$ be a $q$-regular sequence and $(u, M, w)$ a
  linear representation of~$y$. Then
  \begin{equation*}
    y(\val(b))=uM(b)w
  \end{equation*}
  holds for all $b\in\calA_q^\star$. In particular, the value of
  $uM(b)w$ is independent of the particular choice of the linear
  representation~$(u, M, w)$ of~$y$ and of trailing zeros of~$b$.
\end{lemma}
\begin{proof}
  Let $b\in\calA_q^\star$ and write $b=c0^\ell$ for some $c\in\calA_q^\star$
  and some $\ell\in\N_0$ such that~$\ell$ is
  maximal. Then $\val(b)=\val(c)$, and
  by~\eqref{equation:regular-sequence-as-recognisable-series}, we have
  \begin{equation*}
    y(\val(c)) = uM(c)w.
  \end{equation*}
  As $M(0)w=w$ by
  Lemma~\ref{lemma:regular-sequence-as-recognisable-series}, we also have
  \begin{equation*}
    uM(b)w = uM(c)M(0)^\ell w=uM(c)w,
  \end{equation*}
  as required.
\end{proof}

\begin{definition}
  Let $y\in K^{\N_0}$ be a $q$-regular sequence with linear representation $(u,
  M, w)$. Then the recognisable series $x\in K^{\calA^\star}$ with linear
  representation $(u, M, w)$ is called the recognisable series \emph{associated} to~$y$.
\end{definition}

\begin{remark}\label{remark:associated-recognisable-series-compatible}
  From Lemma~\ref{lemma:associated-recognisable-series-well-defined} we see
  that the recognisable series associated to a $q$-regular sequence is well defined.
  From Lemma~\ref{lemma:regular-sequence-as-recognisable-series} we see that a
  recognisable series associated to a $q$-regular sequence is compatible.
  Moreover, we see that every linear representation of a $q$-regular
  sequence is also a linear representation of its associated
  recognisable series.
\end{remark}

\begin{theorem}\label{theorem}
  Let $y$ be a $q$-regular sequence and $(u, M, w)$ be a minimal linear
  representation of the recognisable series associated to~$y$. Then $(u, M, w)$
  is a linear representation of~$y$, and it is also minimal.
\end{theorem}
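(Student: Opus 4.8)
The plan is to split the proof into two parts: first that the triple $(u, M, w)$ is a linear representation of the regular sequence~$y$ at all, and then that its dimension is minimal among such representations.

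For the first part, I would verify Definition~\ref{definition:regular-sequence} directly by exhibiting a witnessing vector-valued sequence. Setting $v(n) \coloneqq M(b)w$, where $b \in \calA_q^\star$ denotes the standard $q$-ary expansion of~$n$, one has $v(0) = w$ (the empty word gives the identity) and $u v(n) = u M(b) w = y(n)$, since $(u, M, w)$ represents the associated recognisable series and this series takes the value $y(\val(b))$ on each word~$b$ by Lemmas~\ref{lemma:regular-sequence-as-recognisable-series} and~\ref{lemma:associated-recognisable-series-well-defined}. What remains is the recursion $v(qn + r) = M(r) v(n)$ for all $n \in \N_0$ and $r \in \calA_q$. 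Whenever $(n, r) \ne (0, 0)$ this is routine: if~$b$ is the standard expansion of~$n$, then prepending~$r$ as the new least-significant digit yields the standard expansion~$rb$ of $qn + r$ (the leading digit is unchanged, so no trailing zero is introduced), and therefore $v(qn + r) = M(rb)w = M(r) M(b) w = M(r) v(n)$.

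The step I expect to be the crux is the single exceptional case $(n, r) = (0, 0)$. Here the naive concatenation produces the word~$0$, which is \emph{not} the standard expansion of~$0$ (the empty word is), so the preceding argument does not apply. Instead one needs $v(0) = M(0) v(0)$, that is, $M(0) w = w$. This is precisely where the hypotheses pay off: the associated recognisable series is compatible by Remark~\ref{remark:associated-recognisable-series-compatible}, and since $(u, M, w)$ is a \emph{minimal} representation of it, Remark~\ref{remark:eigenvector} (resting on Proposition~\ref{proposition:eigenvector}) yields $M(0) w = w$. Thus minimality is not merely the conclusion but is already needed to make $(u, M, w)$ a valid representation of~$y$ at the origin.

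For the second part, I would argue by a dimension count. By Remark~\ref{remark:associated-recognisable-series-compatible}, every linear representation of~$y$ is simultaneously a linear representation of its associated recognisable series; hence the minimal dimension attainable for~$y$ is at least the minimal dimension attainable for the associated series. Since $(u, M, w)$ realises this latter minimum and has just been shown to represent~$y$, the two minimal dimensions must coincide, and $(u, M, w)$ attains the minimum for~$y$. This establishes that $(u, M, w)$ is a minimal linear representation of~$y$.
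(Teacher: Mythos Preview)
Your proof is correct and follows essentially the same route as the paper's: first use compatibility together with minimality of $(u,M,w)$ for the associated series to obtain $M(0)w=w$, then build the vector-valued sequence~$v$ (you do this explicitly via standard expansions, the paper does it recursively---these agree), and finally conclude minimality by the observation that every linear representation of~$y$ is one of the associated series. The emphasis you place on the exceptional case $(n,r)=(0,0)$ and on minimality being needed already for well-definedness matches the paper's argument precisely.
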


In other words, to find a minimal linear representation of a regular
sequence, we can use the minimisation algorithm presented by
Berstel and Reutenauer~\cite[Chapter~2]{Berstel-Reutenauer:2011:noncommutative-rational-series}
on the associated recognisable series, i.\,e., the
recognisable series with the same linear representation as the regular
sequence.

\begin{proof}[Proof of Theorem~\ref{theorem}]
  Let~$x$ denote the recognisable series associated to~$y$
  and~$D$ denote the dimension of $(u, M, w)$.
  By Remark~\ref{remark:associated-recognisable-series-compatible}, $x$ is
  compatible, and therefore, by Proposition~\ref{proposition:eigenvector}
  (see Remark~\ref{remark:eigenvector}), we have
  $M(0)w=w$.

  We define a vector-valued sequence $v\in (\columnvectors)^{\N_0}$ by
  $v(0)\coloneqq w$ and~\eqref{equation:regular-sequence} for all $n\in\N_0$
  and $r\in\calA_q$.
  Note that $M(0)w=w$ implies the validity of~\eqref{equation:regular-sequence}
  for $n=0$ and $r=0$.
  By the above definition of~$v$ and
  Lemma~\ref{lemma:associated-recognisable-series-well-defined},
  $(u, M, w)$
  is indeed a linear representation of~$y$.

  Now, any linear representation of the $q$-regular sequence~$y$
  of dimension~$\widetildealt D$
  is also a linear representation of the recognisable series~$x$ by Remark~\ref{remark:associated-recognisable-series-compatible}.
  Therefore, due to minimality of $(u, M, w)$, we have $\widetildealt D\ge D$.
  In particular, by choosing a minimal linear representation of~$y$, we see
  that $(u, M, w)$ is a minimal linear representation of~$y$ as well.
\end{proof}

At last, we can relax the assumptions of Theorem~\ref{theorem} and
ask: Given a $q$-regular sequence with minimal linear representation,
can we find a recognisable series that gives the same values for each
standard $q$ary expansion of a non-negative integer, but whose minimal
linear representation has a smaller dimension than that of the regular
sequence? The following example provides an affirmative answer.

\begin{example}\label{ex:ignore-trailing-zeros}
  Let us consider the recognisable series
  $x\in\C^{\set{0,1}^{\star}}$ with $x(b) = 2^t$ and $t$ counting
  the letter~$0$ in $b\in\set{0,1}^{\star}$.
  Note that $x(b0) = 2 x(b) \neq x(b)$ for all $b\in\set{0,1}^{\star}$;
  in particular, $x$ is not compatible.
  Moreover, a minimal linear representation $(u, M, w)$ of~$x$ is given by
  \begin{equation*}
    u = (1)\in \mathbb{C}^{1\times 1}, \quad
    M(0) = (2) \in\mathbb{C}^{1\times 1}, \quad
    M(1) = (1) \in\mathbb{C}^{1\times 1}
    \qq{and} w = (1) \in \mathbb{C}^{1\times 1}.
  \end{equation*}

  In contrast, let $y\in\C^{\N_0}$ be the $2$-regular sequence with
  $y(n) = x(\digits_2(n))$ for all $n\in\N_0$.
  Then $(u', M', w')$ with
  \begin{equation*}
    u' = \begin{pmatrix} 1 & 0\end{pmatrix}, \quad
    M'(0) = \begin{pmatrix} 2 & -1\\ 0 & 1 \end{pmatrix}, \quad
    M'(1) = \begin{pmatrix} 1 & 0\\ 0 & 0 \end{pmatrix}
    \qq{and}
    w' = \begin{pmatrix} 1\\ 1 \end{pmatrix}
  \end{equation*}
  is a minimal linear representation of~$y$.

  Therefore, starting with the $2$-regular sequence~$y$ whose
  minimal representation has dimension~$2$ can lead to
  a minimal representation of dimension~$1$ of a recognisable series
  when ignoring trailing zeros.
\end{example}


\bibliography{cheub-bib/cheub}
\bibliographystyle{bibstyle/amsplainurl}

\end{document}
